\newcommand{\ncom}{\newcommand}
\ncom{\ul}{\underline}
\ncom{\ol}{\overline}
\ncom{\bq}{\begin{equation}}
\ncom{\eq}{\end{equation}}
\ncom{\beqn}{\begin{eqnarray*}}
\ncom{\eeqn}{\end{eqnarray*}}
\ncom{\beq}{\begin{eqnarray}}
\ncom{\eeq}{\end{eqnarray}}
\ncom{\nno}{\nonumber}
\ncom{\rar}{\rightarrow}
\ncom{\Rar}{\Rightarrow}
\ncom{\noin}{\noindent}
\ncom{\bc}{\begin{centre}}
\ncom{\ec}{\end{centre}}
\ncom{\sz}{\scriptsize}
\ncom{\rf}{\ref}
\ncom{\sgm}{\sigma}
\ncom{\Sgm}{\Sigma}
\ncom{\dt}{\delta}
\ncom{\Dt}{Delta}
\ncom{\lmd}{\lambda}
\ncom{\Lmd}{\Lambda}
\ncom{\eps}{\epsilon}
\ncom{\pcc}{\stackrel{P}{>}}
\ncom{\dist}{{\rm\,dist}}
\ncom{\sspan}{{\rm\,span}}
\ncom{\re}{{\rm Re\,}}
\ncom{\im}{{\rm Im\,}}
\ncom{\sgn}{{\rm sgn\,}}
\ncom{\ba}{\begin{array}}
\ncom{\ea}{\end{array}}
\ncom{\eop}{\hfill{{\rule{2.5mm}{2.5mm}}}}
\ncom{\eoe}{\hfill{{\rule{1.5mm}{1.5mm}}}}
\ncom{\eof}{\hfill{{\rule{1.5mm}{1.5mm}}}}
\ncom{\hone}{\mbox{\hspace{1em}}}
\ncom{\htwo}{\mbox{\hspace{2em}}}
\ncom{\hthree}{\mbox{\hspace{3em}}}
\ncom{\hfour}{\mbox{\hspace{4em}}}
\ncom{\hsev}{\mbox{\hspace{7em}}}
\ncom{\vone}{\vskip 2ex}
\ncom{\vtwo}{\vskip 4ex}
\ncom{\vonee}{\vskip 1.5ex}
\ncom{\vthree}{\vskip 6ex}
\ncom{\vfour}{\vspace*{8ex}}
\ncom{\norm}{\|\;\;\|}
\ncom{\integ}[4]{\int_{#1}^{#2}\,{#3}\,d{#4}}
\ncom{\inp}[2]{\langle{#1},\,{#2} \rangle}
\ncom{\Inp}[2]{\Langle{#1},\,{#2} \Langle}
\ncom{\vspan}[1]{{{\rm\,span}\#1 \}}}
\ncom{\dm}[1]{\displaystyle {#1}}
\newtheorem{theorem}{\bf Theorem}[section]
\newtheorem{corollary}[theorem]{\bf Corollary}
\newtheorem{lemma}[theorem]{\bf Lemma}
\newtheorem{question}[theorem]{\bf Question}
\newtheoremstyle
    {remarkstyle}
    {}
    {11pt}
    {}
    {}
    {\bfseries}
    {:}
    {     }
    {\thmname{#1} \thmnumber{#2} }
\theoremstyle{remarkstyle}
\newtheorem{remark}[theorem]{\bf Remark}
\newtheorem{example}[theorem]{\bf Example}
\title[Module Tensor Product of Subnormal Modules]
{Module Tensor Product of Subnormal Modules need not be Subnormal}
\author[A. Anand and S. Chavan]{Akash Anand and
Sameer Chavan}
\address{Indian Institute of Technology Kanpur\\
Kanpur- 208016, India}
\email{akasha@iitk.ac.in}
\email{chavan@iitk.ac.in}
\keywords{positive definite kernels, module tensor product, subnormality}
\subjclass[2010]{Primary 46E20; Secondary
46M05, 47B20}
\begin{document}

\begin{abstract}
Let $\kappa : \mathbb D \times \mathbb D \rar \mathbb C$ be a diagonal positive definite kernel and let $\mathscr H_{\kappa}$ denote the associated reproducing kernel Hilbert space of holomorphic functions on the open unit disc $\mathbb D$. Assume that $zf \in \mathscr H$ whenever $f \in \mathscr H.$ Then $\mathscr H$ is a Hilbert module over the polynomial ring $\mathbb C[z]$ with module action
$p \cdot f \mapsto pf$. We say that $\mathscr H_{\kappa}$ is a subnormal Hilbert module if 
the operator $\mathscr M_{z}$ of multiplication by the coordinate function $z$ on $\mathscr H_{\kappa}$ is subnormal.
In [Oper. Theory Adv. Appl, 32: 219-241, 1988], N. Salinas asked whether the module tensor product 
$\mathscr H_{\kappa_1} \otimes_{\mathbb C[z]} \mathscr H_{\kappa_2}$
of subnormal Hilbert modules $\mathscr H_{\kappa_1}$ and $\mathscr H_{\kappa_2}$ is again subnormal.
In this regard, we describe all subnormal module tensor products 
$L^2_a(\mathbb D, w_{s_1}) \otimes_{\mathbb C[z]} L^2_a(\mathbb D, w_{s_2})$, where $L^2_a(\mathbb D, w_s)$ denotes the weighted Bergman Hilbert module with radial weight $$w_s(z)=\frac{1}{s \pi}|z|^{\frac{2(1-s)}{s}}~(z \in \mathbb D, ~s > 0).$$
In particular, the module tensor product 
$L^2_a(\mathbb D, w_{s}) \otimes_{\mathbb C[z]} L^2_a(\mathbb D, w_{s})$ is never subnormal for any $s \geq 6$.
Thus the answer to this question is no.
\end{abstract}

\maketitle


\section{Introduction}

Let $\mathscr H$ be a reproducing kernel Hilbert space of holomorphic functions defined on the unit disc $\mathbb D$ such that $zf \in \mathscr H$ whenever $f \in \mathscr H.$
Thus the linear operator $\mathscr M_z$ of multiplication by the coordinate function $z$ on $\mathscr H$ is bounded. This allows us to realize $\mathscr H$ as a {\it Hilbert module} over the polynomial ring $\mathbb C[z]$ with
module action given by 
\beqn
(p, f) \in \mathbb C[z] \times \mathscr H \longmapsto p(\mathscr M_z)f \in \mathscr H.
\eeqn
Following \cite{D}, we say that the Hilbert module $\mathscr H$ is 
{\it contractive} if the operator norm of the multiplication operator $\mathscr M_z$ is at most $1.$ Further, we say that $\mathscr H$ is 
{\it subnormal} if $\mathscr M_z$ is subnormal, that is, $\mathscr M_z$ has a normal extension in a Hilbert module containing $\mathscr H$ (refer to \cite{Co} for a comprehensive account on subnormal operators).
By Agler's Criterion \cite[Theorem 3.1]{Ag},
$\mathscr H$ is a contractive subnormal Hilbert module if and only if for every $f \in \mathscr H,$ 
$\phi_f(n)=\|z^nf\|^2~(n \in \mathbb N)$ is completely monotone for every $f \in \mathscr H.$ 
Recall from \cite{BCR} that $\phi : \mathbb N \rar (0, \infty)$ is {\it completely monotone} if $$\sum_{j=0}^m(-1)^j {m \choose j}\phi(n+j) \geq 0~\mbox{for all}~m, n \in \mathbb N.$$  
\begin{remark} \label{translate}
Note that if $\phi$ is a completely monotone sequence then so is $\psi_m$ for any $m \in \mathbb N$, where $\psi_m(n)=\phi(m +n)~(n \in \mathbb N)$. 
\end{remark}
We further note that, as a consequence of Hausdorff's solution to the Hausdorff's moment problem \cite[Chapter 4, Proposition 6.11]{BCR}, $\mathscr H$ is a contractive subnormal Hilbert module if and only if for every unit vector $f \in \mathscr H,$ $\{\|z^nf\|^2\}_{n \in \mathbb N}$ 
is a {\it Hausdorff moment sequence}, that is, there exists a unique probability measure $\mu_f$ supported in $[0, 1]$ such that
$$\|z^nf\|^2=\int_{[0, 1]}t^n d\mu_f~(n \in \mathbb N).$$

In this text, we are primarily interested in the following one parameter family of subnormal Hilbert modules.
\begin{example}
For a real number $s > 0,$ consider the Hilbert space $L^2_a(\mathbb D, w_s)$ of holomorphic functions defined on the open unit disc $\mathbb D$ which are square integrable with respect to the weighted area measure $w_s\,dA$
 with radial weight function $$w_s(z)=\frac{1}{s \pi}|z|^{\frac{2(1-s)}{s}}~(z \in \mathbb D).$$
Then $L^2_a(\mathbb D, w_s)$ is a Hilbert module over the polynomial ring $\mathbb C[z]$ (refer to \cite{HKZ} for the basic theory of weighted Bergman spaces). 
Since $L^2_a(\mathbb D, w_s)$ is a closed subspace of $L^2(\mathbb D, w_s)$, $L^2_a(\mathbb D, w_s)$ is a subnormal Hilbert module.
Further, since the measure $w_s\,dA$ is rotation-invariant, the monomials $\{z^n\}_{n \in \mathbb N}$ are orthogonal in
$L^2_a(\mathbb D, w_s)$. Further, 
\beq \label{moments} 
\|z^n\|^2_{L^2_a(\mathbb D, w_s)} = \frac{1}{sn +1}~(n \in \mathbb N).
\eeq
In particular, $L^2_a(\mathbb D, w_s)$ is a reproducing kernel Hilbert space associated with the diagonal reproducing kernel 
\beqn
\kappa_s(z, w)=\frac{s}{(1-z\overline{w})^2} + \frac{1-s}{1-z\overline{w}}~(z, w \in \mathbb D).
\eeqn
\end{example}
\begin{remark}
Note that $L^2_a(\mathbb D, w_1)$ is the unweighted Bergman space $L^2_a(\mathbb D)$. It is worth noting that the definition of $L^2_a(\mathbb D, w_s), s >0$ extends to the case $s=0.$ Indeed, $L^2_a(\mathbb D, w_0)$ may be identified with the Hardy space $H^2(\mathbb D)$ of the open unit disc $\mathbb D$.
\end{remark}

In \cite{Sa}, N. Salinas studied the notion of module tensor product of Hilbert $\mathscr A$-modules for a complex unital algebra $\mathscr A$. Recall that for Hilbert $\mathscr A$-modules $\mathscr H$ and $\mathscr K$, $\mathscr H \otimes_{\mathscr A} \mathscr K$ is obtained by tensoring Hilbert modules $\mathscr H$ and $\mathscr K$, and then dividing out by the natural action of $\mathscr A$ on $\mathscr H$ and $\mathscr K$. In particular, in \cite[Corollary 3.6]{Sa}, it is shown that
for the polynomial algebra $\mathbb C[z]$ and for Hilbert modules $\mathscr H_{\kappa_1}$ and $\mathscr H_{\kappa_2}$ associated with so-called sharp, diagonal kernels $\kappa_1$ and $\kappa_2$ (of finite rank) respectively, the module tensor product $\mathscr H_{\kappa_1} \otimes_{\mathbb C[z]} \mathscr H_{\kappa_2}$ is $\mathbb C[z]$-isomorphic to the Hilbert space $\mathscr H_{\kappa_1 \kappa_2}$ associated with  the diagonal kernel $\kappa_1 \kappa_2.$
In case of scalar valued diagonal kernels $\kappa_1$ and $\kappa_2$, the moment sequences $\{\|z^n\|^2_{\mathscr H_{\kappa_1}}\}_{n \in \mathbb N}$, $\{\|z^n\|^2_{\mathscr H_{\kappa_2}}\}_{n \in \mathbb N}$, 
$\{\|z^n\|^2_{\mathscr H_{\kappa_1\kappa_2}}\}_{n \in \mathbb N}$ are related by the following relations:
\beq
\label{relation-m-s}
\|z^n\|^2_{\mathscr H_{\kappa_1\kappa_2}} =  \displaystyle \frac{1}{\displaystyle \sum_{k=0}^n \frac{1}{\|z^k\|^2_{\mathscr H_{\kappa_1}}} \frac{1}{\|z^{n-k}\|^2_{\mathscr H_{\kappa_2}}}}~(n \in \mathbb N).
\eeq
In \cite[Remark 3.7]{Sa}, he asked whether  $\mathscr H_{\kappa_1} \otimes_{\mathbb C[z]} \mathscr H_{\kappa_2}$ is a subnormal Hilbert module for subnormal Hilbert modules  $\mathscr H_{\kappa_1}$ and $\mathscr H_{\kappa_2}$ associated with diagonal scalar-valued kernels $\kappa_1$ and $\kappa_2$ respectively ? In view of \eqref{relation-m-s} and the discussion following Remark \eqref{translate}, this is equivalent to the following question: 

\begin{question} \label{Q1.4}
Whether 
$\{\|z^n\|^2_{\mathscr H_{\kappa_1\kappa_2}}\}_{n \in \mathbb N}$ is a Hausdorff moment sequence for Hausdorff moment sequences $\{\|z^n\|^2_{\mathscr H_{\kappa_1}}\}_{n \in \mathbb N}$ and $\{\|z^n\|^2_{\mathscr H_{\kappa_2}}\}_{n \in \mathbb N}$ ?
\end{question}

The same question in the context of hyponormal Hilbert modules was settled in the affirmative in \cite{BS} around the same time (other variations of Question \ref{Q1.4} have also been investigated, for example, see \cite[Proposition 6]{At}, \cite[Theorem 1.1]{BD}, and \cite[Theorem 7.1]{CLY}). For subnormal Hilbert modules, it was believed that the answer is no. Indeed, this is true as shown
in the following theorem.
\begin{theorem} \label{mainthm}
Let $s_1$ and $s_2$ be positive real numbers and
let $\mathscr H_{\kappa}$ denote the module tensor product 
$L^2_a(\mathbb D, w_{s_1}) \otimes_{\mathbb C[z]} L^2_a(\mathbb D, w_{s_2})$
of the weighted Bergman Hilbert modules $L^2_a(\mathbb D, w_{s_1})$ and $L^2_a(\mathbb D, w_{s_2})$. 
Then $1/{\|z^n\|^{2}_{\mathscr H_{\kappa}}}$
is a degree $3$ polynomial in $n \in \mathbb N,$ say, $p$. If $\tilde{p}$ denotes the analytic extension of $p$ to the complex plane,
then we have the following:
\begin{enumerate}
\item if $\tilde{p}$ has all real roots  then $\mathscr H_{\kappa}$ is a subnormal Hilbert module if and only if all roots of $\tilde{p}$ lie in $\mathbb L_0$;
\item if $\tilde{p}$ has a non-real complex root $z_0$ then $\mathscr H_{\kappa}$ is a subnormal Hilbert module if and only if $z_0$ lies in the closure of ${\mathbb L}_{-1}$.
\end{enumerate}
where, for a real number $r,$ $\mathbb L_r$ denotes the open left half plane
\beqn
\mathbb L_r :=\{z \in \mathbb C : ~\mbox{real part of}~z~\mbox{is less than~}r\}.
\eeqn
\end{theorem}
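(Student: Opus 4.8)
The plan is to first reduce everything to an explicit computation of the moment sequence $a_n := \|z^n\|^2_{\mathscr H_\kappa}$ using the formula \eqref{relation-m-s} together with \eqref{moments}. Since $1/\|z^k\|^2_{L^2_a(\mathbb D, w_{s_i})} = s_i k + 1$, we get
\[
\frac{1}{a_n} = \sum_{k=0}^n (s_1 k + 1)\bigl(s_2(n-k) + 1\bigr),
\]
and summing this closed form (using $\sum k = n(n+1)/2$ and $\sum k^2 = n(n+1)(2n+1)/6$) produces an explicit cubic polynomial $p(n)$ with leading coefficient $\tfrac{s_1 s_2}{6}>0$. I would record $p$ exactly, isolate its roots, and note that $\tilde p(n) = \tfrac{s_1 s_2}{6}(n - z_1)(n - z_2)(n - z_3)$; because the coefficients are real, the roots are either all real or one real and a complex-conjugate pair.

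Next I would invoke the characterization from the introduction: $\mathscr H_\kappa$ is a (contractive, after rescaling) subnormal Hilbert module if and only if $\{a_n\}$ is a Hausdorff moment sequence, i.e. $a_n = \int_{[0,1]} t^n \, d\mu(t)$. Equivalently, since $a_n = 1/p(n) = 1/\tilde p(n)$, I want to decide exactly when the reciprocal of a cubic with positive leading coefficient, evaluated on $\mathbb N$, is a Hausdorff moment sequence. The natural tool is the integral representation
\[
\frac{1}{\tilde p(n)} = \frac{6}{s_1 s_2}\,\frac{1}{(n - z_1)(n - z_2)(n - z_3)},
\]
combined with the Beta-integral identity $\frac{\Gamma(n - z + 1)\Gamma(\text{stuff})}{\cdots}$ — more concretely, for $\mathrm{Re}\, z < 0$ one has the prototype $\frac{1}{n - z} = \int_0^1 t^{n} \cdot (-z) t^{-z-1}\,dt$, which is a positive measure on $[0,1]$ exactly when $\mathrm{Re}\, z < 0$, in fact one needs $z$ real and negative for a genuine positive measure on $[0,1]$. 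I would build the candidate measure for $1/\tilde p$ by convolving (multiplicatively) three such measures when all roots are real and lie in $\mathbb L_0$, which immediately gives the ``if'' direction of (1). For the conjugate-pair case I would use the real kernel $\frac{1}{(n-z_0)(n-\bar z_0)}$ and its associated density on $[0,1]$, which after a residue computation has the form $t^{n}$ integrated against something like $t^{-\mathrm{Re}\,z_0 - 1}$ times a bounded oscillatory factor; controlling its sign is where the boundary condition $\mathrm{Re}\, z_0 \le -1$ enters.

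For the converse (``only if'') directions I would argue by contradiction using the necessary conditions a Hausdorff moment sequence must satisfy — positivity of all finite differences $(-1)^j \Delta^j a_n \ge 0$, or more cleanly the fact that a Hausdorff moment sequence extends to a completely monotone function and hence $a_n$ must be eventually positive, decreasing, convex, etc., and its ``analytic'' continuation $1/\tilde p$ cannot have poles in $[0,\infty)$ nor grow; concretely, if a real root exceeds $0$, or if $\mathrm{Re}\, z_0 > -1$, I would exhibit a specific Hankel-type determinant or a specific difference $\sum_j (-1)^j \binom{m}{j} a_{n+j}$ that is negative for suitable $m, n$. The cleanest route is probably: $\{a_n\}$ is a Hausdorff moment sequence $\iff$ $\{a_n\}$ and $\{a_n - a_{n+1}\}$ are both Hausdorff moment sequences (the shift/difference recursion), iterate this and translate it into sign conditions on the coefficients of the cubic $p$ and its ``differences'' $p(n) - p(n+1)\cdot(\text{ratio})$, then match those sign conditions to the root location.

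The main obstacle I anticipate is the converse in case (2): ruling out subnormality when the conjugate-pair root has real part in $(-1, \text{something})$ requires showing a genuinely non-obvious inequality fails, and the boundary case $\mathrm{Re}\, z_0 = -1$ (inclusion of the closure) is delicate — it likely corresponds to a measure that degenerates to having an atom or a non-integrable-looking density that is nonetheless a legitimate finite measure, so the analysis at the boundary must be done by a careful limiting argument rather than the generic interior computation. A secondary technical point is verifying that the cubic really does have the claimed root structure for all $s_1, s_2 > 0$ (in particular that the relevant discriminant sign changes are compatible with the stated trichotomy), and checking the specific claim from the abstract that $s_1 = s_2 = s \ge 6$ always lands outside the subnormal region — I would do this last as a corollary by plugging $s_1 = s_2 = s$ into the explicit cubic and locating its roots as a function of $s$.
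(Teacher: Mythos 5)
Your computation of the moment sequence and your plan for the ``if'' directions match the paper: from \eqref{relation-m-s} and \eqref{moments} one gets $1/\|z^n\|^2_{\mathscr H_\kappa}=\frac{1}{6}(n+1)(s_1s_2n^2+(3(s_1+s_2)-s_1s_2)n+6)$, so $-1$ is always a root, and the partial-fraction/multiplicative-convolution construction of an explicit density $w(t)$ on $[0,1]$ is exactly what the paper does (its Lemma 2.1, via \cite[Theorem 3.1]{AC} for real roots and a direct residue computation for a conjugate pair). But the ``only if'' directions, which are the heart of the theorem, have genuine gaps as you have sketched them. Your proposed ``cleanest route'' --- iterating the shift/difference recursion and translating complete monotonicity into ``sign conditions on the coefficients of the cubic'' --- does not work: the differences of $1/p(n)$ are not reciprocals of polynomials, complete monotonicity is not equivalent to finitely many coefficient inequalities, and the paper's Example \ref{1.4} shows that the first negative difference $\mathcal D_m$ can occur only at $m=75$, so exhibiting a bad Hankel determinant or finite difference ``for suitable $m,n$'' by hand is not a viable fallback. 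Likewise, your remark that the analytic continuation of a Hausdorff moment sequence ``cannot have poles in $[0,\infty)$'' is not usable as stated: the roots of $\tilde p$ in the right half-plane need not be integers (e.g.\ $1/10$ and $2/5$ for $s_1=15,s_2=10$), and without an interpolation theorem there is no reason the completely monotone extension $\int_0^1 t^x\,d\mu(t)$ must coincide with $1/\tilde p(x)$ off the integers. The paper closes this by observing the sequence is minimal ($\mu(\{0\})=0$) and invoking Widder's interpolation theorem through \cite[Proposition 4.1]{AC}, which forces all zeros of $\tilde p$ into $\mathbb L_0$.

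The missing mechanism for part (2) is determinacy of the Hausdorff moment problem: the (possibly signed, but integrable) density $w(t)\,dt$ produced by partial fractions represents the same moments, so by uniqueness it \emph{is} the representing measure, and subnormality is therefore equivalent to $w\ge 0$ on $(0,1]$. With $z_0=a+ib$, $b>0$, one has $w(t)=c\,\bigl(t^{a+1}-\sin(b\log t+\theta)/\sin\theta\bigr)t^{-a-1}$ up to a positive constant. When $-1<a<0$ the factor $t^{a+1}$ tends to $0$ while the oscillatory term has modulus $1/|\sin\theta|>1$ infinitely often as $t\to 0^+$, so $w$ takes negative values and the sequence is not Hausdorff --- this is the concrete negativity argument your sketch defers to ``controlling its sign.'' Conversely, when $a\le -1$ the positivity of $t^{a+1}-\sin(b\log t+\theta)/\sin\theta$ is a genuinely nontrivial inequality (the paper proves it by a derivative estimate on $[e^{-2\theta/b},1]$ and a separate lower bound on $(0,e^{-2\theta/b})$), not something that falls out of the generic construction. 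So the overall architecture you propose is right, but the two load-bearing ingredients --- the determinacy/uniqueness step identifying the representing measure with the explicit density, and the interpolation theorem for the real-root converse --- need to be supplied before the converse halves of (1) and (2) are proved.
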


\begin{corollary} \label{coro}
Let $s_1$ and $s_2$ be positive real numbers and
let $\mathscr H_{\kappa}$ denote the module tensor product 
$L^2_a(\mathbb D, w_{s_1}) \otimes_{\mathbb C[z]} L^2_a(\mathbb D, w_{s_2})$
of the weighted Bergman Hilbert modules $L^2_a(\mathbb D, w_{s_1})$ and $L^2_a(\mathbb D, w_{s_2})$. Let $\mathfrak s$ and $\mathfrak p$ denote the sum and product of $s_1, s_2$ respectively.
Then we have the following:
\begin{enumerate}
\item If $({3\mathfrak s - \mathfrak p})^2 \geq 24\, {\mathfrak p}$ then $\mathscr H_{\kappa}$ is subnormal if and only if $3\mathfrak s > \mathfrak p$.
\item If $({3\mathfrak s - \mathfrak p})^2 < 24\, {\mathfrak p}$ then $\mathscr H_{\kappa}$ is a subnormal Hilbert module if and only if 
$\mathfrak s \geq \mathfrak p.$
\end{enumerate}
\end{corollary}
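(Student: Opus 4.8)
The plan is to deduce the corollary from Theorem \ref{mainthm} by making the cubic polynomial $p$ explicit and then translating the geometric conditions on its roots into inequalities in the parameters $\mathfrak s = s_1 + s_2$ and $\mathfrak p = s_1 s_2$. First I would compute $\|z^n\|^2_{\mathscr H_\kappa}$ via the relation \eqref{relation-m-s}: since $\|z^k\|^2_{L^2_a(\mathbb D, w_{s_i})} = 1/(s_i k + 1)$ by \eqref{moments}, we get
\beqn
\frac{1}{\|z^n\|^2_{\mathscr H_\kappa}} = \sum_{k=0}^n (s_1 k + 1)(s_2(n-k) + 1),
\eeqn
and carrying out this sum (using $\sum_{k=0}^n k = n(n+1)/2$ and $\sum_{k=0}^n k^2 = n(n+1)(2n+1)/6$) yields an explicit cubic $p(n)$ in $n$. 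I would record its leading coefficient (which is $\mathfrak p/6$, hence positive, so $\tilde p$ indeed has the shape needed) and, more importantly, the location of its roots.

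The next step is to analyze $\tilde p$ according to the dichotomy in Theorem \ref{mainthm}. A cubic with positive leading coefficient always has at least one real root; the discriminant of $\tilde p$ governs whether the other two roots are real or a conjugate pair, and a computation should show that this discriminant has the same sign as $(3\mathfrak s - \mathfrak p)^2 - 24\,\mathfrak p$ (up to a positive factor), which explains the case split in the statement. In the "all real roots" case (item (1) of the corollary), Theorem \ref{mainthm}(1) requires every root to lie in $\mathbb L_0$, i.e. to be strictly negative; since the leading coefficient is positive this is equivalent to $\tilde p(0) > 0$ together with all coefficients of $\tilde p$ being positive — and here $\tilde p(0) = p(0) = n+1\big|_{n=0}\cdot(\dots)$, which one checks equals a positive multiple of $3\mathfrak s - \mathfrak p$ — so the condition collapses to $3\mathfrak s > \mathfrak p$. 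In the "non-real root" case (item (2)), Theorem \ref{mainthm}(2) requires the real part of the complex root $z_0$ to be $\le -1$; writing the roots as $\alpha$ (real) and $\beta \pm i\gamma$, Vieta gives $\alpha + 2\beta = -(\text{coeff})$ and the other symmetric functions, from which $2\beta$ (hence $\re z_0$) is expressible in $\mathfrak s, \mathfrak p$, and the inequality $\re z_0 \le -1$ should simplify precisely to $\mathfrak s \ge \mathfrak p$.

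The routine but unavoidable labor is the algebra: expanding the sum, extracting the coefficients of the cubic, computing its discriminant, and then doing the Vieta bookkeeping to isolate $\re z_0$. I expect the main obstacle to be the case-(2) computation — extracting the real part of the complex root cleanly without solving the cubic by radicals. The trick will be to avoid Cardano entirely: set $2\beta = \sigma$ and use only the elementary symmetric relations $\alpha + \sigma = -a_2/a_3$, etc., together with the known value of one real root if it factors nicely (one should check whether $n = -1/s_1$ or similar is a root of $\tilde p$, since $(s_1 k + 1)$ vanishes there for $k$ extended to reals — this may give a rational root that makes the factorization explicit and sidesteps the discriminant computation altogether). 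If such a rational root exists, both items of the corollary follow immediately by factoring $\tilde p$ into a linear times a quadratic factor and reading off the roots of the quadratic.
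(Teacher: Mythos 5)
Your overall strategy is exactly the paper's: the authors dispose of the corollary in one sentence, by combining Theorem \ref{mainthm} with the explicit factorization obtained inside its proof, namely $6/\|z^n\|^{2}_{\mathscr H_{\kappa}} = (n+1)\bigl(\mathfrak p\, n^2 + (3\mathfrak s - \mathfrak p)\,n + 6\bigr)$ together with the root formulas \eqref{roots}. So the rational root you hoped for does exist, but it is $n=-1$ (not $-1/s_1$); once you have it, the quadratic factor $q(n)=\mathfrak p\, n^2+\gamma n+6$ with $\gamma:=3\mathfrak s-\mathfrak p$ does all the work: its roots are real iff $\gamma^2\ge 24\mathfrak p$ (which is your discriminant claim, and explains the case split), their product is $6/\mathfrak p>0$ and their sum is $-\gamma/\mathfrak p$, so in the real case both are negative iff $\gamma>0$, and in the complex case the common real part is $-\gamma/(2\mathfrak p)$, which is $\le -1$ iff $\mathfrak s\ge\mathfrak p$. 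Two details of your sketch need repair. First, $\tilde p(0)=1/\|z^0\|^{2}_{\mathscr H_{\kappa}}=1$, not a positive multiple of $3\mathfrak s-\mathfrak p$. Second, the ``all coefficients positive'' test applied to the full cubic $\tfrac16\bigl(\mathfrak p\, n^3+3\mathfrak s\, n^2+(6+\gamma)n+6\bigr)$ yields the condition $\gamma>-6$, which is strictly weaker than $\gamma>0$; it gives the corollary only after one also checks that $\gamma\in(-6,0]$ is incompatible with case (1) and $s_1,s_2>0$ (indeed $3\mathfrak s=\gamma+\mathfrak p>0$ and $\mathfrak p\le\gamma^2/24$ force $\gamma<-24$ whenever $\gamma<0$). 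Working with the sum and product of the roots of the quadratic factor, as above, avoids this detour entirely and is what the paper's appeal to \eqref{roots} amounts to.
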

\begin{remark}
If $\mathfrak s \geq \mathfrak p$ then $L^2_a(\mathbb D, w_{s_1}) \otimes_{\mathbb C[z]} L^2_a(\mathbb D, w_{s_2})$ is always subnormal. 
Also, if $3 \mathfrak s \leq \mathfrak p$ then $L^2_a(\mathbb D, w_{s_1}) \otimes_{\mathbb C[z]} L^2_a(\mathbb D, w_{s_2})$ is never subnormal. 
\end{remark}


The following is immediate  from the preceding remark.
\begin{corollary}
For a positive number $s,$ let $\mathscr H_{\kappa}$ denote the module tensor product 
$L^2_a(\mathbb D, w_{s}) \otimes_{\mathbb C[z]} L^2_a(\mathbb D, w_{s})$
of the weighted Bergman Hilbert module $L^2_a(\mathbb D, w_{s})$ with itself.
Then we have the following:
\begin{enumerate}
\item If $s \leq 2$ then $\mathscr H_{\kappa}$ is always subnormal.
\item If $s \geq 6$ then $\mathscr H_{\kappa}$ never subnormal.
\end{enumerate}
\end{corollary}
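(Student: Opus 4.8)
The plan is to obtain the statement by specializing Corollary~\ref{coro} — equivalently, the Remark following it — to the diagonal case $s_1=s_2=s$, so that the sum becomes $\mathfrak s=2s$ and the product becomes $\mathfrak p=s^2$. Since $s>0$, one may divide by $s$: the inequality $\mathfrak s\geq\mathfrak p$ is then equivalent to $2\geq s$, and the inequality $3\mathfrak s\leq\mathfrak p$ is equivalent to $6\leq s$. Hence part (1) follows from the assertion in that Remark that $\mathfrak s\geq\mathfrak p$ forces $\mathscr H_\kappa$ to be subnormal, and part (2) follows from the assertion there that $3\mathfrak s\leq\mathfrak p$ precludes subnormality.

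If one prefers to argue directly from Corollary~\ref{coro}, I would first record that $(3\mathfrak s-\mathfrak p)^2-24\,\mathfrak p=s^2\bigl((6-s)^2-24\bigr)$, whose sign is determined by whether $s$ lies outside or inside the interval $(6-2\sqrt6,\,6+2\sqrt6)$. For $s\leq 2$ one then checks both regimes: when $s\leq 6-2\sqrt6$ case (1) of Corollary~\ref{coro} applies and $3\mathfrak s>\mathfrak p$ holds since $s<6$, so $\mathscr H_\kappa$ is subnormal; when $6-2\sqrt6<s\leq 2$ case (2) applies and $\mathfrak s\geq\mathfrak p$ holds since $s\leq 2$, so again $\mathscr H_\kappa$ is subnormal. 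Symmetrically, for $s\geq 6$ one has $3\mathfrak s\leq\mathfrak p$ in the first regime and $\mathfrak s<\mathfrak p$ in the second, so subnormality fails in both cases.

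There is essentially no obstacle here: granted Theorem~\ref{mainthm} and Corollary~\ref{coro}, the result is a two-line arithmetic specialization, which is why it is labelled an immediate consequence. The only point needing a moment's attention is the alignment of weak versus strict inequalities at the endpoints — namely that $s=2$ gives $\mathfrak s=\mathfrak p$ (hence subnormality, by the boundary case of the Remark) and that $s=6$ gives $3\mathfrak s=\mathfrak p$ together with $\mathfrak s<\mathfrak p$ (hence non-subnormality) — which is exactly what the thresholds $s\leq 2$ and $s\geq 6$ in the statement encode.
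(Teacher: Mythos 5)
Your proposal is correct and follows exactly the paper's route: the paper derives this corollary as an immediate specialization of the Remark following Corollary \ref{coro} (equivalently of Corollary \ref{coro} itself) to $s_1=s_2=s$, where $\mathfrak s=2s$, $\mathfrak p=s^2$, so $\mathfrak s\geq\mathfrak p$ becomes $s\leq 2$ and $3\mathfrak s\leq\mathfrak p$ becomes $s\geq 6$. Your additional case-by-case check against the discriminant $(3\mathfrak s-\mathfrak p)^2-24\mathfrak p=s^2\bigl((6-s)^2-24\bigr)$ is a harmless (and correct) redundancy.
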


The proofs of Theorem \ref{mainthm} and Corollary \ref{coro} will be presented in the next section.
Let us illustrate these results with the help of some instructive examples of different flavor. 

\begin{figure}[H]
  \centering
  \begin{subfigure}[$\left\{(s_1, s_2): ({3\mathfrak s - \mathfrak p})^2 \geq 24\, {\mathfrak p}~\mbox{and}~3\mathfrak s > \mathfrak p\right\}$]
    {\includegraphics[width = 0.49\textwidth]{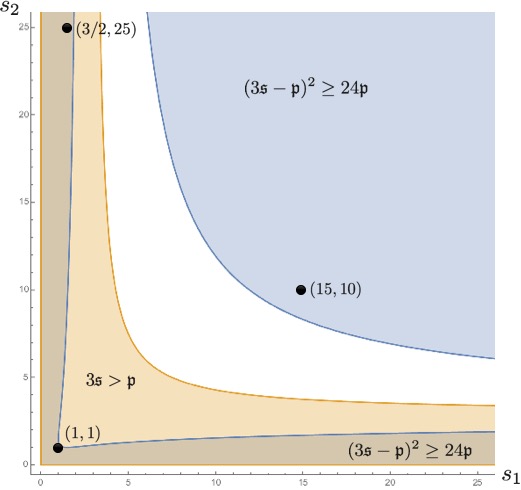}}
  \end{subfigure}%
  ~
  \begin{subfigure}[$\left\{(s_1, s_2) : ({3\mathfrak s - \mathfrak p})^2 < 24\, {\mathfrak p}~\mbox{and}~\mathfrak s \geq \mathfrak p \right\}$]
    {\includegraphics[width = 0.49\textwidth]{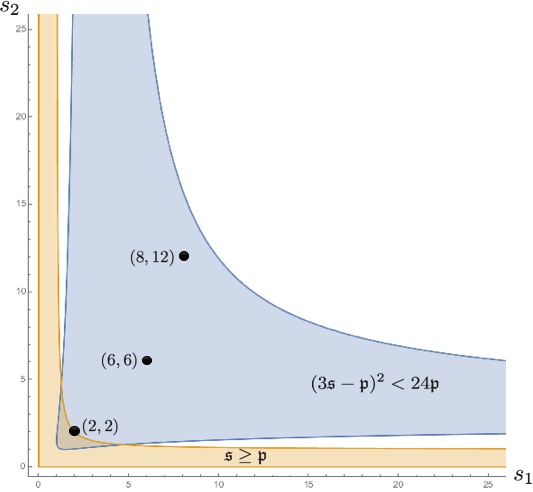}}
  \end{subfigure}%
  \caption{
Regions of subnormality of the module tensor product $L^2_a(\mathbb D, w_{s_1}) \otimes_{\mathbb C[z]} L^2_a(\mathbb D, w_{s_2})$ in $s_1$-$s_2$ plane, where $\mathfrak s$ and $\mathfrak p$ denote the sum
and product of $s_1, s_2$ respectively;  specific cases
investigated in Example \ref{1.4} 
have been shown as points in $s_1$-$s_2$ plane.  
  }
  \label{fig:fg_fg_disc_hc}
\end{figure}

\begin{example} \label{1.4}
Let $\mathscr H_{\kappa}$, $\tilde{p}$ be as given in Theorem \ref{mainthm} and let 
\beqn
\mathcal D_{m} = \sum_{j = 0}^m (-1)^j  \binom{m}{j} \| z^{j}
\|^2_{\mathscr{H}_{\kappa}}~(m \in \mathbb N). 
\eeqn
Let $\mathfrak s$ and $\mathfrak p$ denote the sum and product of $s_1, s_2$ respectively.
Then
we have the following:
\vskip.2cm
\begin{enumerate}
\item The case in which $\tilde{p}$ has real roots, that is, $({3\mathfrak s - \mathfrak p})^2 > 24\, {\mathfrak p}$
(see Figure 1(a)):
\begin{enumerate}
\item
If  $s_1 = 15, s_2 = 10$ then  $3\mathfrak s < \mathfrak p.$
In this case, the roots of $\tilde{p}$ are $-1, 1/10, 2/5$ and $\mathcal D_{75} < 0.$ 

\item If $s_1=1, s_2=1$ then $\mathfrak s > \mathfrak p.$
In this case, the roots of $\tilde{p}$ are $-1, -2, -3$ and
$\mathcal D_{m} \geq 0$ for all $m \in \mathbb N.$ The later part may be concluded from Corollary \ref{coro}(1).

\item If $s_1 =3/2 , s_2 =25$ then $3\mathfrak s > \mathfrak p.$
In this case, roots are $-1, 2(-7 \pm 2\sqrt{6})/25$ and $\mathcal D_m \ge 0$ for all $m \in \mathbb N$. Once again, the later part may be concluded from Corollary \ref{coro}(1).
\end{enumerate}

\vskip.2cm

\item The case in which $\tilde{p}$ has a complex root, that is,  $({3\mathfrak s - \mathfrak p})^2 < 24\, {\mathfrak p}$ (see Figure 1(b)):
\begin{enumerate}
\item 
If $s_1 = 6, s_2 = 6,$ then $\mathfrak s < \mathfrak p.$
In this case, the  
roots of $\tilde{p}$ are $-1, \pm i/\sqrt{6}$ and
$\mathcal D_{73}  < 0.$

\item
If
$s_1 = 8, s_2 = 12,$ then $\mathfrak s < \mathfrak p.$
In this case, the 
roots of $\tilde{p}$ are $-1, (3 \pm i\sqrt{7})/16$ and
$\mathcal D_{73}  < 0.$
\item If $s_1=2, s_2=2$ then $\mathfrak s = \mathfrak p.$
In this case, the roots of $\tilde{p}$ are $-1, -1 \pm i/\sqrt{2}$ and
$\mathcal D_{m} \geq 0$ for all $m \in \mathbb N.$
The later part may be concluded from Corollary \ref{coro}(2).
\end{enumerate}
\end{enumerate}
\end{example}


In parts (1)(a) and (2)(a) of  Example \ref{1.4},  for sufficiently large values of $m$, the multiplication operator $\mathscr M_z$ on $\mathscr H_{\kappa}$ is not $m$-hypercontractive in the sense of J. Agler \cite{Ag}. 
Moreover, in these examples, $m=75, 74$ are the smallest integers for which $m$-hypercontractivity of $\mathscr M_z$ fails. It also indicates that
direct verification of the non-subnormality of $\mathscr M_z$ is tedious.

\section{Proof of Theorem \ref{mainthm}}

Recall that, for a real number $r,$ $\mathbb L_r$ denotes the open left half plane
\beqn
\mathbb L_r =\{z \in \mathbb C : ~\mbox{real part of}~z~\mbox{is less than~}r\}.
\eeqn

In the proof of Theorem \ref{mainthm}, we need the following lemma. 
\begin{lemma}
If $a_0 \in (-\infty, 0)$ and $a_1, a_2 \in \mathbb L_0,$  then
\beqn
\frac{1}{(n-a_0)(n-a_1)(n-a_2)} = \int_{[0, 1]}t^n w(t) \, dt~(n \in \mathbb N),
\eeqn
where the weight function $w : [0, 1] \rar (0, \infty)$ is given as follows:
\begin{enumerate}   
\item If $a_0, a_1, a_2$ are distinct real numbers, then
\beqn
w(t)= \frac{t^{-a_0-1}}{(a_0-a_1)(a_0-a_2)}  + \frac{t^{-a_1-1}}{(a_1-a_0)(a_1-a_2)} + \frac{t^{-a_2-1}}{(a_2-a_0)(a_2-a_1)}~(t \neq 0).
\eeqn
\item If $a_1 =a_0$ and $a_2$ is a real number not equal to $a_0$, then
\beqn
w(t)=  \frac{1}{(a_0-a_2)^2}\, (t^{-a_2-1} - t^{-a_0-1}) - \frac{1}{a_0-a_2} \, t^{-a_0-1} \, \log t~(t \neq 0). 
\eeqn
\item If $a_0=a_1=a_2$, then
\beqn
w(t)=\frac{1}{2} \, t^{-a_0-1} (\log t)^2~(t \neq 0).
\eeqn
\item If $a_1=a+ib, ~a_2=a-ib$ are complex numbers with $b >0$ then
\beqn
w(t)= \frac{1}{(a_0-a)^2 + b^2}\Big(t^{-a_0-1} - \frac{\sqrt{(a_0-a)^2+b^2}}{b} \, t^{-a-1} \sin (b \log t + \theta) \Big)~(t \neq 0),
\eeqn
where $\theta$ denotes the principal argument of $a_1-a_0$.
\end{enumerate}
\end{lemma}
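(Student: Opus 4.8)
The plan is to derive everything from the single elementary identity
\[
\int_{[0,1]} t^{n-a-1}\,dt \;=\; \frac{1}{n-a}\qquad (n\in\mathbb{N},\ \re a<0),
\]
valid because $\re(n-a)>0$, together with its derivatives in the parameter $a$: differentiating $k$ times under the integral sign (justified by a routine dominated-convergence argument using $\re(n-a)>0$) yields $\int_{[0,1]} t^{n-a-1}(-\log t)^k\,dt = k!/(n-a)^{k+1}$ for every $k\in\mathbb{N}$. In each of the four cases I would then write out the partial fraction decomposition of $1/\big((x-a_0)(x-a_1)(x-a_2)\big)$ -- simple poles at $a_0,a_1,a_2$ in (1); a double pole at $a_0=a_1$ together with a simple pole at $a_2$ in (2); a triple pole at $a_0$ in (3); a simple pole at $a_0$ (residue $1/((a_0-a)^2+b^2)$) together with the conjugate pair $a_1,\overline{a_1}$ in (4) -- specialise $x=n$, and integrate term by term with the two displayed identities. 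Cases (1)--(3) fall out at once; in case (4) one further merges the two conjugate terms into a single real term via $t^{-a_1-1}=t^{-a-1}e^{-ib\log t}$ and the polar form $a_1-a_0=\sqrt{(a_0-a)^2+b^2}\,e^{i\theta}$, which after a brief manipulation is exactly the stated $\sin(b\log t+\theta)$ expression.

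With the identity in place, the substance of the lemma is the positivity of $w$ on $(0,1)$; integrability of $w$ on $[0,1]$ is automatic, since $\re a_j<0$ keeps every exponent above $-1$. For cases (1)--(3) I would note that, for fixed $t\in(0,1)$, the displayed $w(t)$ is precisely the (possibly confluent) second divided difference at the nodes $a_0,a_1,a_2$ of the function $x\mapsto t^{-x-1}$. This function is strictly convex in $x$, its second derivative being $t^{-x-1}(\log t)^2>0$; hence by the Hermite--Genocchi integral representation of divided differences over the $2$-simplex, $w(t)>0$. The representation applies verbatim at the repeated nodes appearing in (2) and (3).

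Case (4) is where the real work lies, and I expect it to be the main obstacle: the nodes are non-real, the convexity argument is gone, and in fact $\{1/((n-a)^2+b^2)\}_n$ by itself need not be a Hausdorff moment sequence. My approach here is to write $\tfrac{1}{n-a_0}=\int_{[0,1]}\sigma^{n-a_0-1}\,d\sigma$ and $\tfrac{1}{(n-a)^2+b^2}=\int_{[0,1]}\tau^{n-a-1}\tfrac{-\sin(b\log\tau)}{b}\,d\tau$ (the second weight changes sign), form the multiplicative convolution of these two moment representations, and so arrive at $w(z)=\tfrac{z^{-a_0-1}}{b}\int_{0}^{-\log z} e^{(a-a_0)v}\sin(bv)\,dv$. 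Thus $w(z)\ge0$ is equivalent to
\[
\int_{0}^{-\log z} e^{(a-a_0)v}\sin(bv)\,dv \;\ge\; 0 \qquad (0<z<1),
\]
and, after computing the elementary antiderivative, to the requirement
\[
\sqrt{(a_0-a)^2+b^2}\;e^{(a-a_0)v}\cos\big(bv-\psi\big)\;\le\; b, \qquad \psi=\arctan\tfrac{a_0-a}{b},
\]
for all $v\ge0$. I would prove this bound by locating the critical points of its left-hand side (they occur at $bv\in\pi\mathbb{Z}$) and checking that the maximum is attained at $v=0$, where both sides equal $b$. This last step is exactly where the hypotheses on the positions of $a_0,a_1,a_2$ are used; it is also the source of the dichotomy in Theorem~\ref{mainthm}, since the critical-point estimate succeeds precisely when $a-a_0\le0$, i.e.\ when $a_1$ has real part at most $a_0$.
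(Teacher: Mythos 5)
Your derivation of the integral identity is essentially the paper's: the paper obtains cases (1)--(3) by citing \cite[Theorem 3.1]{AC} and case (4) by exactly your partial-fraction computation, writing each $1/(n-a_j)$ as $\int_0^1 t^{n-a_j-1}\,dt$ and merging the conjugate pair via the polar form of $a_1-a_0$. Your parameter-differentiation route to the confluent cases and your multiplicative-convolution formula $w(z)=\tfrac{z^{-a_0-1}}{b}\int_0^{-\log z}e^{(a-a_0)v}\sin(bv)\,dv$ are correct and reproduce the stated weights. Where you go beyond the paper is positivity, which the paper's proof of the lemma does not address at all: your observation that in cases (1)--(3) the weight $w(t)$ is the second (possibly confluent) divided difference at $a_0,a_1,a_2$ of the convex function $x\mapsto t^{-x-1}$, hence positive by Hermite--Genocchi, is a clean self-contained argument absent from the paper.

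Your case (4) analysis is also correct, and the ``obstacle'' you ran into is real rather than a gap in your argument: since the maximum of $e^{cv}\bigl(b\cos(bv)-c\sin(bv)\bigr)$ over $v\ge 0$ equals $b$ precisely when $c=a-a_0\le 0$, one has $w\ge 0$ on $(0,1)$ if and only if $\re a_1\le a_0$. This condition is \emph{not} implied by the hypotheses $a_0<0$, $a_1\in\mathbb L_0$, so the claim $w:[0,1]\rar(0,\infty)$ in the statement is false in case (4) as written. The paper itself exploits exactly this sign change in the proof of Theorem \ref{mainthm}, where $a_0=-1$ and $w(t_m)<0$ whenever $-1<\re\alpha_1<0$. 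In case (4) the lemma should be read as asserting only the integral identity, with positivity holding exactly under your condition $\re a_1\le a_0$ --- which, as you note, is what produces the dichotomy in Theorem \ref{mainthm}(2).
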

\begin{proof}
The first three parts are special cases of \cite[Theorem 3.1]{AC}. 
To see the last part, note that
\beqn
&& \frac{1}{(n-a_0)(n-a_1)(n-a_2)}  \\ =  & \frac{1}{(a-a_0)^2 + b^2} \Big(\frac{1}{n-a_0} & + \frac{a_2-a_0}{2ib}\frac{1}{n-a_1} -  \frac{a_1-a_0}{2ib}\frac{1}{n-a_2} \Big).
\eeqn
Further, the term in brackets can be rewritten as
\beqn
\int_{0}^1 t^{n-a_0-1}dt + \frac{a_2-a_0}{2ib}\int_{0}^1 t^{n-a_1-1}dt -  \frac{a_1-a_0}{2ib}\int_{0}^1 t^{n-a_2-1}dt.
\eeqn
However, \beqn \frac{a_2-a_0}{2ib} t^{-a_1-1} -  \frac{a_1-a_0}{2ib} t^{-a_2-1}=\\
-\frac{\sqrt{(a-a_0)^2+ b^2}}{b}  \Big(\frac{e^{i(b \log t + \theta)} -e^{-i(b \log t + \theta)} }{2i}\Big) t^{-a-1}, \eeqn
where $\theta$ denotes the principal argument of $a_1-a_0$. It is now easy to see that $w(t)$ has the desired form.
\end{proof}
\begin{remark}
Note that in all the above cases, $\int_{[0, 1]}w(t)dt \in (0, \infty).$ 
\end{remark}

\begin{proof}[Proof of Theorem \ref{mainthm}] We have already recorded in \eqref{moments} that $$\|z^n\|^2_{L^2_a(\mathbb D, w_{s_j})}=\frac{1}{s_j n + 1}~\mbox{for~} n \in \mathbb N~\mbox{and} ~j=1, 2.$$
It now follows from \eqref{relation-m-s} that
\beqn
\|z^n\|^{2}_{\mathscr H_{\kappa}} =\frac{1}{\displaystyle \sum_{k=0}^n (s_1 k + 1)(s_2(n-k)+1)}~(n \in \mathbb N).
\eeqn
However, \beq \label{positive}
\sum_{k=0}^n (s_1 k + 1)(s_2(n-k)+1) = \frac{1}{6}(n+1)( s_1s_2n^2 + (3(s_1 + s_2) - s_1s_2)n + 6) \notag \\
= \frac{1}{\alpha_1 \alpha_2}(n+1)(n-\alpha_1)(n-\alpha_2), \eeq 
where $\alpha_1, \alpha_2$ are given by
\beq \label{roots}
\alpha_1:=\frac{-\gamma + \sqrt{\gamma^2 - 24 s_1 s_2}}{2s_1s_2}, \quad \alpha_2:=\frac{-\gamma - \sqrt{\gamma^2 - 24 s_1 s_2}}{2s_1s_2}
\eeq
with $\gamma:=3(s_1 + s_2) - s_1s_2.$
It follows that 
$$\|z^n\|^{2}_{\mathscr H_{\kappa}}=\frac{{\alpha_1 \alpha_2}}{(n+1)(n-\alpha_1)(n-\alpha_2)}~(n \in \mathbb N).$$ 
This shows that
$1/{\|z^n\|^{2}_{\mathscr H_{\kappa}}}$
is a degree $3$ polynomial $p$ in $n \in \mathbb N.$ 

Let $\tilde{p}$ denote the analytic extension of $p$ to the complex plane.
Assume now that one of $\alpha_1$ and $\alpha_2$ belongs to $\mathbb C \setminus \mathbb L_0.$ By \eqref{positive} and \eqref{roots}, however, both $\alpha_1$ and $\alpha_2$ must belong to  $\mathbb C \setminus \mathbb L_0.$
We contend that the sequence 
\beqn
\Big\{\|z^n\|^{2}_{\mathscr H_{\kappa}}=\frac{{\alpha_1 \alpha_2}}{(n+1)(n-\alpha_1)(n-\alpha_2)}\Big\}_{n \in \mathbb N}
\eeqn
is not a Hausdorff moment sequence. On the contrary, suppose that there exists a probability measure $\mu$ on $[0, 1]$ such that \beqn \|z^n\|^{2}_{\mathscr H_{\kappa}} =\int_{[0, 1]} t^n d\mu~({n \in \mathbb N}). \eeqn 
Let $n_0$ be the smallest integer bigger than the maximum of real parts of $\alpha_1$ and $\alpha_2.$  
Consider now the completely monotone sequence $\{\|z^{n+n_0}\|^{2}_{\mathscr H_{\kappa}}\}_{n \in \mathbb N}$ (Remark \ref{translate}),
and note that by the preceding lemma,
\beqn
\|z^{n+n_0}\|^{2}_{\mathscr H_{\kappa}} = \frac{{\alpha_1 \alpha_2}}{(n + n_0+1)(n + n_0-\alpha_1)(n + n_0-\alpha_2)} =\int_{[0, 1]}t^n w(t)dt
\eeqn
for some non-negative integrable function $w : [0, 1] \rar (0, \infty).$
On the other hand, $\|z^{n+n_0}\|^{2}_{\mathscr H_{\kappa}}=\int_{[0, 1]} t^n t^{n_0} d\mu(t).$ By the determinacy of the Hausdorff moment problem \cite{BCR}, we must have
\beqn
t^{n_0} d\mu(t) =w(t)dt.
\eeqn
We now apply the previous lemma to $a_0:=-n_0-1$, $a_1:=\alpha_1-n_0$ and $a_2:=\alpha_2-n_0$ to conclude that $t^{-n_0}w(t)$ takes one of the following expressions (for some scalars $c_0, c_1, c_2, \theta$): 
\begin{enumerate}   
\item If $a_1, a_2$ are distinct real numbers, then
\beqn
t^{-n_0}w(t)= c_0   + c_1 t^{-\alpha_1-1} + c_2 t^{-\alpha_2-1}.
\eeqn
\item If $a_1 =a_2$, then
\beqn
t^{-n_0} w(t)=  c_0 (1 - t^{-\alpha_1-1}) + c_1 t^{-\alpha_1-1} \log t. 
\eeqn
\item If $a_1=c+ib, ~a_2=c-ib$ are complex numbers with $b >0$ then
\beqn
t^{-n_0} w(t)= c_0 t^{-a_0-1}+ c_1 t^{-\alpha-1} \sin\Big(b \log t + \theta\Big),
\eeqn
where $\theta$ is the principal argument of $a_1-a_0$.
\end{enumerate} 
One way to get a contradiction is to show that the integral of $t^{-n_0}w(t)$ over $[0,1]$ is not convergent. However, to avoid computations, one can alternatively arrive at the contradiction through an interpolation result on completely monotone sequences \cite{W}. Toward this, observe that $\{\|z^n\|^{2}_{\mathscr H_{\kappa}}\}_{n \in \mathbb N}$ is minimal in the sense that
$\mu(\{0\})=0,$ where we used the convention that 
$\infty \cdot 0 = 0.$  
By \cite[Proposition 4.1]{AC}, all the zeros of  $\tilde{p}(z)={(z+1)(z-\alpha_1)(z-\alpha_2)}$ must lie in $\mathbb L_0$, which contradicts the assumption that at least one of  $\alpha_1$ and $\alpha_2$ belongs to 
$\mathbb C \setminus \mathbb L_0.$ 
The remaining part in (1) is immediate from \cite[Theorem 3.1]{AC}.

To see (2), assume that $\tilde{p}$ has a non-real complex root $\alpha_1=a+ib \in \mathbb L_0$ with $b > 0$.  In view of the preceding discussion, it suffices to show 
that $\mathscr H_{\kappa}$ is a subnormal Hilbert module if and only if $\alpha_1$ lies in the closure of ${\mathbb L}_{-1}$.
By (4) of the preceding lemma, 
\beq \label{w(t)-proof}
w(t) &=& \frac{\alpha_1\alpha_2}{(1+a)^2 + b^2}\Big(1 - \frac{\sqrt{(1+a)^2+b^2}}{b} \, t^{-a-1} \sin (b \log t + \theta) \Big) \notag \\
&=& \frac{|\alpha_1|^2}{(1+a)^2 + b^2}\Big(t^{a+1} - \frac{{\sin (b \log t + \theta)}}{\sin \theta} \Big)t^{-a-1},
\eeq
where $\theta$ denotes the principal argument of $a+1+ib$.
If $a=-1$ then $w(t) = \frac{|\alpha_1|^2}{b^2} (1 - \sin(b \log t + \theta )) \geq 0$ for $t \in (0, 1]$, and hence $\|z^{n}\|^{2}_{\mathscr H_{\kappa}}$ is a Hausdorff moment sequence.
Assume now that $\alpha_1=a+ib$ lies in ${\mathbb L}_{-1}$, that is, $a < -1.$ Note that $\pi/2 < \theta < \pi.$
Consider the function
\beqn
F(t) := t^{a+1} - \frac{{\sin (b \log t + \theta)}}{\sin \theta}~(t > 0),
\eeqn
and note that $F'(t) \leq \frac{1}{t} \Big( a+1 - \frac{b \cos(b \log t + \theta)}{\sin \theta} \Big).$ Let $t_0:= e^{-2 \theta/b}$. Then, for $t \in [t_0, 1],$ we have
\beqn
F'(t) \leq \frac{1}{t} \Big( a+1 - \frac{b \cos \theta }{\sin \theta} \Big)=0.
\eeqn
Hence $F(t) \geq F(1)=0$ for all $t \in [t_0, 1].$ For $t \in (0, t_0),$ we have
\beqn
F(t) & \geq & e^{-2\theta(a+1)/b} - \frac{1}{\sin \theta} \\ & \geq & 
e^{-2(a+1)/b} - \sqrt{1 + \Big(\frac{a+1}{b}\Big)^2} \\
& \geq & e^{-2(a+1)/b} - \left({1 + \frac{1}{2}\Big(\frac{a+1}{b}\Big)^2}\right), 
\eeqn
which is clearly positive. It is now clear from \eqref{w(t)-proof} that $\|z^{n}\|^{2}_{\mathscr H_{\kappa}}$ is a Hausdorff moment sequence.

Assume next that $-1 < a < 0.$ Then $0 < \theta < \pi/2.$
Choose $m \in \mathbb Z$ such that $t^{a+1}_m < \frac{1}{2\sin \theta},$ where $t_m := e^{({2m \pi + \pi/2 - \theta})/{b}}.$
Then, by \eqref{w(t)-proof}, 
\beqn
w(t_m) =
&=& \frac{|\alpha_1|^2}{(1+a)^2 + b^2}\Big(t^{a+1}_m - \frac{1}{\sin \theta} \Big)t^{-a-1}_m \\
& \leq & 
-\frac{|\alpha_1|^2}{(1+a)^2 + b^2}\frac{t^{-a-1}}{2\sin \theta} < 0.
\eeqn
By the continuity of $w$, $w < 0$ in a neighborhood of $t_m.$ It follows that $\|z^{n}\|^{2}_{\mathscr H_{\kappa}}$ is not a Hausdorff moment sequence.
This completes the proof.
\end{proof}

Finally, we note that the conclusions in Corollary \ref{coro} follow from Theorem \ref{mainthm} and \eqref{roots}.

\medskip \textit{Acknowledgment}. \
The authors gratefully acknowledge Gadadhar Misra for drawing their attention to Salinas's question
that had remained unresolved thus far. They thank him for his constant encouragement
throughout the preparation of this manuscript.

\end{document}